\newcommand{\be}{\begin{equation}}
\newcommand{\ee}{\end{equation}}
\newcommand{\la}{\lambda}
\newcommand{\om}{\omega}
\newtheorem{theorem}{Theorem}[section]
\newtheorem{proposition}[theorem]{Proposition}
\newtheorem{corollary}[theorem]{Corollary}
\theoremstyle{definition}
\newtheorem{definition}[theorem]{Definition}
\newtheorem{example}[theorem]{Example}
\theoremstyle{remark}
\newtheorem{remark}[theorem]{Remark}
\numberwithin{equation}{section}
\begin{document}

\dedicatory{Dedicated to Robert Griess, Jr. in honor of his 71st birthday}

\title[Fusion procedure for two-parameter quantum algebras]
{On fusion procedure for the two-parameter quantum algebra in type A} 
\author{Naihuan Jing}
\address{Department of Mathematics, North Carolina State University, Raleigh, NC 27695, USA
}
\email{jing@math.ncsu.edu}
\author{Ming Liu}
\address{School of Mathematics, South China University of Technology,
Guangzhou 510640, China}
\email{mamliu@scut.edu.cn}

\thanks{{\scriptsize
\hskip -0.4 true cm MSC (2010): Primary: 17B30; Secondary: 17B68.
\newline Keywords: two-parameter quantum groups, fusion procedure, Schur-Weyl duality
}}

\maketitle

\begin{abstract}
Finite dimensional irreducible modules of the two-parameter quantum enveloping algebra
$U_{r,s}(\mathfrak{sl}_n)$ are explicitly constructed using the
fusion procedure when $rs^{-1}$ is generic. This provides an alternative
and combinatorial description of the Schur-Weyl duality for the two-parameter
quantum linear algebras of type $A$.
\end{abstract}

\section{Introduction}

Schur-Weyl duality is one of the main methods to construct irreducible
modules of the classical simple Lie groups out of the fundamental representations \cite{W}.
The quantum version for the quantum enveloping algebra
$U_q(\mathfrak{sl}_n)$ and the Hecke algebra $H_q({\mathfrak S}_m)$ has played an important role
in the fervent development of quantum groups. They provide one of the first examples \cite{Jb2} to show the
similarity between the classical and quantum theories.

Two-parameter general and special linear quantum groups \cite{T, DPW, DD}
are further generalization of the corresponding one-parameter Drinfeld-Jimbo quantum groups
\cite{D, Jb1}. The two-parameter quantum groups also had their origin
in the quantum inverse scattering method \cite{R} as
well as other approaches \cite{J, Do}. 
In particular,
the Schur-Weyl duality  was also generalized to the two-parameter case \cite{BW3}.
As in the classical situation, let $V$ be the natural $n-$ dimensional module of $U_{r,s}(\mathfrak{sl}_n)$, then arbitrary irreducible modules can be
constructed using the $(r,s)$-symmetric tensor
$S^2_{r,s}(V)$ and the R-matrix $R=R_{VV}$ for two-parameter quantum group
$U_{r,s}(\mathfrak{sl}_n)$. Alternatively any finite dimensional irreducible $U_{r,s}(\mathfrak{sl}_n)$-module
can be built from the tensor product $V^{\otimes m}$ using the symmetry
of the Hecke algebra $H_q({\mathfrak S}_m)$, where $q=\sqrt{s/r}$.

In this work we give an alternative
description of all irreducible representations ($rs^{-1}$ is not a root of unity)
using the developments \cite{IMO} of the fusion procedure \cite{C} in
the quantum inverse scattering method. It seems that the two-parameter
case can be treated quite similarly using the fusion procedure, thus one can more or
less apply the known results from the one-parameter case to get
corresponding formulas. As there is an abstract argument
available to construct the irreducible modules in the
two-parameter case, we nevertheless give a detailed
description of all irreducible $U_{r, s}(\mathfrak{sl}_n)$-modules
using the fusion procedure to supplement the existing theory.

Much of the results in the paper are expected for the experts, and  we hope the
current presentation can further show the similarity and connection to
the one-parameter case.
The one-parameter case is adapted
into the two parameter situation in a self-contained manner and
we try to be complete as much as possible for pedagogical purpose.

\section{Two-parameter quantum group $U_{r,s}(\mathfrak{sl}_n)$ and R-matrix}

We start with the basic definition of the two-parameter quantum groups $U_{r,s}(\mathfrak{gl}_n)$,
$U_{r,s}(\mathfrak{sl}_n)$ following the notations in \cite{BW3}. Let
$\Pi=\{\alpha_j=\epsilon_j-\epsilon_{j+1}|j=1,2,...,n-1\}$ be the
set of the simple roots of type $A_{n-1}$, where
$\epsilon_1,\epsilon_2,...,\epsilon_n$ are an orthonormal basis of a
Euclidean space with inner product $\langle\  ,\  \rangle$.
The root system is then
$\Phi=\{\epsilon_i-\epsilon_j|1\leq i\neq j\leq n\}$.

Fix two distinct nonzero complex numbers $r,s$, and assume they are in general position.

\begin{definition}
The two-parameter quantum enveloping algebra
$\widetilde{U}=U_{r,s}(\mathfrak{gl}_n)$ is the unital associative
algebra over $\mathbb{C}$ generated by $e_j,f_j$, $(1\leq j<n)$ and $a_i^{\pm 1}, {b}_i^{\pm 1}
$, $(1\leq i\leq n)$ with the following relations:

(R1) $a_i^{\pm1}$, $b_i^{\pm1}$ commute with each other, and $a_ia_i^{-1}=b_ib_i^{-1}=1$,

(R2) $a_ie_ja_i^{-1}=r^{\langle \epsilon_i,\alpha_j\rangle}e_j$ and
$a_if_ja_i^{-1}=r^{-\langle \epsilon_i,\alpha_j\rangle}f_j,$

(R3) $b_ie_jb_i^{-1}=s^{\langle \epsilon_i,\alpha_j\rangle}e_j$ and
$b_if_jb_i^{-1}=s^{-\langle \epsilon_i,\alpha_j\rangle}f_j,$

(R4) $[e_i,f_j]=\frac{\delta_{ij}}{r-s}(a_ib_{i+1}-a_{i+1}b_i)$,

(R5) $[e_i,e_j]=[f_i,f_j]=0, \mbox{if}\  |i-j|>1,$

(R6) $e_{i}^2e_{i+1}-(r+s)e_ie_{i+1}e_i+rse_{i+1}e_{i}^2=0$ and

     $e_{i}e^2_{i+1}-(r+s)e_{i+1}e_{i}e_{i+1}+rse_{i+1}^2e_{i}=0$,

(R7)
$f_{i}^2f_{i+1}-(r^{-1}+s^{-1})f_if_{i+1}f_i+r^{-1}s^{-1}f_{i+1}f_{i}^2=0$
and

$f_{i}f^2_{i+1}-(r^{-1}+s^{-1})f_{i+1}f_{i}f_{i+1}+r^{-1}s^{-1}f_{i+1}^2f_{i}=0$,
\newline where $[x\, ,\, y]=xy-yx$ is the commutator.

\end{definition}

The algebra $U=U_{r,s}(\mathfrak{sl}_n)$ is the subalgebra of $\widetilde{U}=U_{r,s}(\mathfrak{gl}_n)$
 generated by the elements $e_j$, $f_j$, $\omega_j$ and $\omega'_j$ $(1\leq j<n)$, where
 $\omega_j=a_jb_{j+1}$, and $\omega'_j=a_{j+1}b_j$. These elements satisfy the relations
 (R5)-(R7) along with

$(R1')$ The $\omega_i$, $\omega_j'$ are invertible and they all commute with each other,

$(R2')$
$\omega_ie_j=r^{\langle \epsilon_i,\alpha_j\rangle}s^{\langle \epsilon_{i+1},\alpha_j\rangle}e_j\omega_i$
and
$\omega_if_j=r^{-\langle \epsilon_i,\alpha_j\rangle}s^{-\langle \epsilon_{i+1},\alpha_j\rangle}f_j\omega_i,$

$(R3')$ $\omega_i'e_j=r^{\langle \epsilon_{i+1},\alpha_j\rangle}s^{\langle \epsilon_{i},\alpha_j\rangle}e_j\omega_i'$
and
$
\omega_i'f_j=r^{-\langle \epsilon_{i+1},\alpha_j\rangle}s^{-\langle \epsilon_{i},\alpha_j\rangle}f_j\omega_i',$

$(R4')$ $[e_i,f_j]=\frac{\delta_{ij}}{r-s}(\omega_i-\omega_i')$.

Clearly when $r=q$, $s=q^{-1}$, the algebra $U$ modulo the
ideal generated by the elements $\omega_j^{-1}-\omega_j'$, $1\leq
j<n$, is isomorphic to $U_q(\mathfrak{sl}_n)$.

The algebra $U_{r,s}(\mathfrak{sl}_n)$ is a Hopf algebra under the
comultiplication $\Delta$ such that $\omega_i$, $\omega_i'$ are group-like
elements and the other nontrivial comultiplications, counits and antipodes
are given by:

$$\Delta(e_i)=e_i\otimes 1+\omega_i\otimes e_i, \Delta(f_i)=1\otimes f_i+f_i\otimes \omega_i',$$

$$\epsilon(e_i)=\epsilon(f_i)=0, S(e_i)=-\omega_i^{-1}e_i, S(f_i)=-f_i\omega_i'^{-1}.$$

The representation theory of $U_{r,s}(\mathfrak{sl}_n)$ is quite similar to that of one-parameter
case. We recall some of the basic notations for later discussion. Let $\Lambda=\mathbb{Z}\epsilon_1\oplus
\mathbb{Z}\epsilon_2\oplus\cdots\oplus \mathbb{Z}\epsilon_n$ be the
weight lattice of $\mathfrak{gl}_n$, $Q=\mathbb{Z}\Phi$ the root lattice, and
$Q_+=\sum_{i=1}^{n-1}\mathbb{Z}_{\geq 0}\alpha_i$, where $\epsilon_i$ are the
orthonormal vectors as before. Recall that $\Lambda$
is equipped with the partial order given by $\nu\leq \lambda$ if and
only if $\lambda-\nu \in Q_+$.

For each $\lambda\in \Lambda$ one defines the
algebra homomorphism $\widehat{\lambda}: \widetilde{U}^0\rightarrow \mathbb{C}$ by:
\begin{equation}
\widehat{\lambda}(a_i)=r^{\langle\epsilon_i,\lambda\rangle},
~~~~~~~~~~~~~~~~~~~~~\qquad
~~~~~~~~~~~~~~~~~~~~~~\widehat{\lambda}(b_i)=s^{\langle\epsilon_i,\lambda\rangle},
\end{equation}
where $\widetilde{U}^0$ is the subalgebra of $U_{r,s}(\mathfrak{gl}_n)$ generated by $a_i^{\pm 1}$, $b^{\pm 1}_{i}$ $(1\leq i\leq n)$.
Then the restriction of $\widehat{\lambda}: U^0\rightarrow \mathbb{C}$ of $\widehat{\lambda}$ to the subalgebra $U^0$ of $U$ generated
by $\omega_j$, $\omega_j'$ $(1\leq j<n)$ satisfies:
\begin{equation}
\widehat{\lambda}(\omega_j)=r^{\langle\epsilon_j,\lambda\rangle}s^{\langle\epsilon_{j+1},\lambda\rangle},
~~~~~~~~~~~~~~~~~~~~~\qquad
~~~~~~~~~~~~~~~~~~~~~~\widehat{\lambda}(\omega_j')=r^{\langle\epsilon_{j+1},\lambda\rangle}s^{\langle\epsilon_i,\lambda\rangle}.
\end{equation}

It was shown in \cite{BW2} when $rs^{-1}$ is not a root of unity, the homomorphisms
$\widehat{\lambda}=\widehat{\mu}$ if and only if the corresponding
weights $\lambda=\mu$. These homomorphisms are called generalized weights.
For an algebra homomorphism $\chi: U^0\mapsto \mathbb
C$ one defines the generalized weight subspace of $U_{r, s}$-module M by
$$M_{\chi}=\{v\in M|
(\omega_i-\chi(\omega_i))^mv=(\omega_i'-\chi(\omega_i'))^mv=0, \,
\mbox{for all $i$ and for some $m$}\}.
$$

If $m=1$, the subspace $M_{\chi}$ becomes a weight subspace associated
with the homomorphism $\chi$.
Since $U^0$ is commutative, it is easy to see that
any finite dimensional
$U_{r,s}$-module $M$ can be decomposed into a sum of generalized weight subspaces:
\begin{equation}
M=\bigoplus_{\chi} M_{\chi}.
\end{equation}

When all generalized
weights in $M$ are of the form $\chi.(-\widehat{\alpha})$ for a fixed $\chi$
and $\alpha$ varying in $Q_+$ (here $\chi.(-\widehat{\alpha})(\omega_i)=\chi(\omega_i)(-\widehat{\alpha})(\omega_i)$
and $\chi.(-\widehat{\alpha})(\omega_i')=\chi(\omega_i')(-\widehat{\alpha})(\omega_i')$),
we say $M$ is a {\it highest weight
module} of weight $\chi$ and write $M=M(\chi)$. Benkart and
Witherspoon \cite{BW3} have shown that when $M$ is simple, all
generalized weight subspaces are actually weight subspaces.
Moreover,
if all generalized weights 
are of the form $\widehat{\lambda}$ for $\lambda\in \Lambda$, we will simply write
$M_{\lambda}$ for $M_{\widehat{\lambda}}$, and similarly the highest
weight module $M(\widehat{\lambda})$ will be denoted as
$M(\lambda)$.

One can also
define the notion of Verma modules $M(\lambda)$ as in the classical situation. It is known \cite{BW3} that all finite dimensional simple
$U_{r,s}(\mathfrak{sl}_n)$-modules are realized as simple
quotients of Verma modules. We will write by $V(\lambda)$ the
simple quotient of the Verma module $M(\lambda)$.

Now let us look at the simplest irreducible module $V(\overline{\omega}_1)$.
Let $V$ be the $n$-dimensional vector space over $\mathbb{C}$ with
basis $\{v_j|1\leq j\leq n\}$, and $E_{ij}\in End(V)$ be defined by
$E_{ij}v_k=\delta_{jk}v_i$. Define the $U_{r,s}(\mathfrak{sl}_n)$-action on $V$ by
\begin{equation*}
\begin{aligned}
&e_j=E_{j,j+1}, ~~f_j=E_{j+1,j}, \\
&\om_j=rE_{jj}+sE_{j+1, j+1}+\sum_{k\neq j, j+1} E_{kk},\\
&\om_j'=sE_{jj}+rE_{j+1, j+1}+\sum_{k\neq j, j+1} E_{kk}, \\
\end{aligned}
\end{equation*}
where $1\leq j\leq n-1$.
It is clear that
$V=\bigoplus_{j=1}^{n}V_{\epsilon_j}$ and it is the simple $U_{r,s}(\mathfrak{sl}_n)$-module $V(\overline{\omega}_1)$.

Let $\check{R}=\check{R}_{VV}: V\otimes V\longrightarrow V\otimes V$ be the
$R$-matrix defined by
\begin{equation}\label{R-matrix}
\check{R}=\sum_{i=1}^nE_{ii}\otimes E_{ii}+r\sum_{i<j}E_{ji}\otimes
E_{ij}+s^{-1}\sum_{i<j}E_{ij}\otimes
E_{ji}+(1-rs^{-1})\sum_{i<j}E_{jj}\otimes E_{ii},
\end{equation}
which is essentially determined by the simple module $V$ and the comultiplication $\Delta$
(see Prop. \ref{e:SW1}) .
For each $1\leq i<k$ let $\check{R}_i$ be the
isomorphism on $V^{\otimes k}$ defined by
\begin{equation*}
\breve{R}_i(w_1\otimes w_2\otimes\cdots\otimes
w_k)=w_1\otimes\cdots\otimes \check{R}(w_i\otimes w_{i+1})\otimes
w_{i+2}\otimes\cdots\otimes w_k.
\end{equation*}
Then we have the braid relations:
\begin{equation}\label{e:braid}
\check{R}_i\check{R}_{i+1}\check{R}_i=\check{R}_{i+1}\check{R}_i\check{R}_{i+1}
~~~~~~ for ~~~~1\leq i<k-1,
\end{equation}
The construction also implies that for $|i-j|\geq 2$,
\begin{equation*}
\check{R}_i\check{R}_j=\check{R}_j\check{R}_i.
\end{equation*}

Moreover one can directly check that
\begin{equation}\label{minimal polynomial}
\check{R}_i^{2}=(1-\frac{r}{s})\check{R}_i+\frac{r}{s}Id
\end{equation}
for all $1\leq i<k$. In particular, the minimum polynomial of $\check{R}$ on $V\otimes V$ is
$(t-1)(t+\frac{r}{s})$ if $s\neq -r$ \cite{BW3}.

\begin{proposition} \cite{BW3} \label{e:SW1}
The endomorphisms $\check{R}_i\in End(V^{\otimes k})$ commute with
the action of $U_{r,s}(\mathfrak{sl}_n)$ on $V^{\otimes k}$.
\end{proposition}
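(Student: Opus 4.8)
The plan is to verify that each $\check R_i$ commutes with the action of every generator $e_j$, $f_j$, $\omega_j^{\pm1}$, $\omega_j'^{\pm1}$ of $U_{r,s}(\mathfrak{sl}_n)$ on $V^{\otimes k}$. Since the $\check R_i$ act only in the $i,i{+}1$ tensor slots and, by the coassociativity of $\Delta$, the action of a generator $x$ on $V^{\otimes k}$ is obtained from iterated coproducts, it suffices by a standard reduction to treat the case $k=2$ and show $\check R\,\Delta(x) = \Delta(x)\,\check R$ on $V\otimes V$ for $x \in \{e_j, f_j, \omega_j, \omega_j'\}$. Indeed, writing $\Delta^{(k)}(x)$ as a sum of terms each of which is a product of "local" operators, the only slots where $\check R_i$ does not trivially commute are $i$ and $i{+}1$, and there the claim collapses to the two-fold statement after tensoring on unaffected factors; I would spell this reduction out carefully since it is where coassociativity and the group-like/skew-primitive structure of $\Delta$ enter.

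Next I would carry out the $k=2$ check generator by generator. For the Cartan generators this is immediate: $\omega_j$ and $\omega_j'$ act diagonally on the weight basis $\{v_i \otimes v_k\}$, and one checks directly from the explicit formula \eqref{R-matrix} that $\check R$ preserves each weight space $V_\mu \otimes V_\nu$-decomposition refined appropriately — more precisely $\check R$ maps the span of $\{v_j\otimes v_i, v_i \otimes v_j\}$ (for $i<j$) to itself and fixes each $v_i\otimes v_i$, and on such a two-dimensional block $\Delta(\omega_\ell)$ acts by the same scalar on both basis vectors, so commutativity is automatic. For $e_j$ one uses $\Delta(e_j) = e_j \otimes 1 + \omega_j \otimes e_j$ with $e_j = E_{j,j+1}$ and the explicit action of $\omega_j$; one computes $\check R \Delta(e_j)(v_a \otimes v_b)$ and $\Delta(e_j)\check R(v_a\otimes v_b)$ on basis vectors and matches them. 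The nontrivial cases are those where $a$ or $b$ equals $j$ or $j{+}1$, and here the coefficient $(1 - rs^{-1})$ in \eqref{R-matrix} together with the eigenvalues $r$ and $s$ of $\omega_j$ conspire to make the two sides agree. The computation for $f_j$ is the mirror image using $\Delta(f_j) = 1\otimes f_j + f_j \otimes \omega_j'$ and the action of $\omega_j'$.

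The main obstacle is purely the bookkeeping in the $e_j$ (and $f_j$) computation: one must track several families of basis vectors $v_a\otimes v_b$ according to the relative positions of $a,b$ with respect to $j$ and $j{+}1$, and in each case verify a small identity among monomials in $r$ and $s$. There is no conceptual difficulty — the identity $(1-rs^{-1})\cdot 1 + rs^{-1}\cdot 1 = 1$ and the relation $r\cdot s^{-1}$ appearing as the off-diagonal product are exactly what is needed — but it is error-prone, so I would organize it as a short case table rather than prose. Alternatively, and more conceptually, one can observe that $\check R = P \circ R$ where $R$ is the universal $R$-matrix of $U_{r,s}(\mathfrak{sl}_n)$ evaluated on $V\otimes V$ and $P$ is the flip; then $\check R \Delta(x) = \Delta(x)\check R$ follows from the quasi-triangularity identity $R\,\Delta(x) = \Delta^{\mathrm{op}}(x)\,R$ together with $P\Delta^{\mathrm{op}}(x) = \Delta(x)P$. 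I would likely present the direct verification for self-containedness, as the introduction promises, but remark on the $R$-matrix derivation. Once $k=2$ is established, the general $k$ case follows by the reduction described above, completing the proof.
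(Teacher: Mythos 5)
Your proposal is correct and follows essentially the same route as the paper, which also reduces the claim via the Hopf algebra structure (the paper states it suffices to check $k=3$, you reduce to $k=2$) and then verifies directly that $\check{R}$ commutes with $\Delta(e_j)$, $\Delta(f_j)$, $\Delta(\omega_j)$, $\Delta(\omega_j')$ on the fundamental representation. The paper's proof is only a two-line sketch, so your more detailed case analysis and the remark on quasi-triangularity are elaborations of, not departures from, its argument.
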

\begin{proof} By the Hopf algebra structure of $U_{r,s}(\mathfrak{sl}_n)$ it is enough
to check this for $k=2$. Then it is a direct verification to confirm that $\check{R}$ commutes with
$\Delta(e_i), \Delta(f_i),
\Delta(\om_i), \Delta(\om_i')$ for $i=1, \cdots, n-1$ on the fundamental representation
$V(\overline{\omega}_1)$.
\end{proof}

\section{Yang-Baxterization and the wedge modules of $U_{r,s}(\mathfrak{sl}_n)$}
The fusion procedure relies on the spectral parameter dependent R-matrix $\check{R}(z)$,
which satisfies the so-called quantum Yang-Baxter equation. One form of the Yang-Baxter equation (YBE) is the following
matrix equation on $V^{\otimes 3}$:
\begin{equation}\label{e:YBE}
\check{R}_1(z)\check{R}_2(zw)\check{R}_1(w)=\check{R}_2(w)\check{R}_1(zw)\check{R}_2(z).
\end{equation}
where $\check{R}(0)=\check{R}$. The Yang-Baxterization method recovers the spectral parameter dependent
R-matrix $\check{R}(z)$ from the initial condition (\ref{e:braid}) at $z=0$.

The Yang-Baxterization process was carried out for the two-parameter $R$-matrix in \cite{JZL} using the method of \cite{GWX}.

\begin{proposition}\cite{JZL}
For the braid group representation $\breve{R}=\check{R}_{VV}$, the
R-matrix $\check{R}(z)$ is given by
\begin{align}\nonumber
\check{R}(z)&=(1-zrs^{-1})\sum_{i=1}^nE_{ii}\otimes
E_{ii}+(1-z)(r\sum_{i>j}+s^{-1}\sum_{i<j})E_{ij}\otimes E_{ji}\\
\label{R(z)} &+z(1-rs^{-1})\sum_{i<j}E_{ii}\otimes
E_{jj}+(1-rs^{-1})\sum_{i>j}E_{ii}\otimes E_{jj}.
\end{align}
\end{proposition}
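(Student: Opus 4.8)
The plan is to obtain $\check{R}(z)$ as a scalar deformation of the braid operator $\check{R}$. Since \eqref{minimal polynomial} shows that $\check{R}$ satisfies a quadratic equation, $\check{R}^{-1}$ lies in the linear span of $\check{R}$ and $\mathrm{Id}$, so a natural ansatz is $\check{R}(z)=\alpha(z)\check{R}+\beta(z)\,\mathrm{Id}$ with $\alpha(0)=1$ and $\beta(0)=0$, which forces $\check{R}(0)=\check{R}$. Because each side of the Yang-Baxter equation \eqref{e:YBE} is a product of three factors, an overall scalar cancels, so it is enough to work with the normalized form $\check{R}(z)=\check{R}+\varphi(z)\,\mathrm{Id}$ and to reinstate the factor $1-z$ appearing in \eqref{R(z)} at the end by clearing a denominator.

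First I would substitute $\check{R}_i(z)=\check{R}_i+\varphi(z)\,\mathrm{Id}$ into \eqref{e:YBE} and expand both triple products. The cubic terms cancel by the braid relation \eqref{e:braid}; the squares $\check{R}_i^{2}$ are rewritten using the Hecke-type relation \eqref{minimal polynomial}; and the difference of the two sides then collapses to $A(z,w)\check{R}_1+B(z,w)\check{R}_2$ with $A+B\equiv 0$, so that \eqref{e:YBE} is equivalent to the single functional equation
\begin{equation*}
\varphi(zw)\bigl((1-rs^{-1})+\varphi(z)+\varphi(w)\bigr)=\varphi(z)\varphi(w).
\end{equation*}
Seeking a solution regular at the origin with $\varphi(0)=0$, the substitution $\varphi(z)=cz/(1-z)$ forces $c=1-rs^{-1}$; multiplying through by $1-z$ then yields the normalization $\check{R}(z):=(1-z)\check{R}+z(1-rs^{-1})\,\mathrm{Id}$, which manifestly restores $\check{R}(0)=\check{R}$. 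This is exactly the Ge-Wu-Xue Yang-Baxterization of $\check{R}$ associated with its two eigenvalues $1$ and $-r/s$ recorded in \eqref{minimal polynomial}, so one may equally well quote \cite{GWX,JZL} at this point. I expect this reduction inside \eqref{e:YBE} to be the only genuinely nonroutine step: one must expand in the algebra generated by $\check{R}_1,\check{R}_2$ acting on $V^{\otimes 3}$ and apply \eqref{e:braid} and \eqref{minimal polynomial} in the right order so that the two sides really differ only by a multiple of $\check{R}_1-\check{R}_2$.

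It then remains to substitute the explicit formula \eqref{R-matrix} for $\check{R}$, together with $\mathrm{Id}=\sum_{i,j}E_{ii}\otimes E_{jj}$, into $(1-z)\check{R}+z(1-rs^{-1})\,\mathrm{Id}$ and to collect the coefficients of the elementary tensors. The diagonal terms $E_{ii}\otimes E_{ii}$ pick up $(1-z)+z(1-rs^{-1})=1-zrs^{-1}$; the transposition terms $E_{ij}\otimes E_{ji}$ keep the coefficients $(1-z)r$ for $i>j$ and $(1-z)s^{-1}$ for $i<j$; the terms $E_{ii}\otimes E_{jj}$ with $i>j$ combine to $(1-z)(1-rs^{-1})+z(1-rs^{-1})=1-rs^{-1}$; and the terms $E_{ii}\otimes E_{jj}$ with $i<j$, which do not occur in $\check{R}$, appear only from the identity with coefficient $z(1-rs^{-1})$. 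Reorganizing these four contributions is precisely \eqref{R(z)}.
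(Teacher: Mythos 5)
Your argument is correct and is essentially the Ge--Wu--Xue Yang--Baxterization that the paper itself does not spell out but outsources to \cite{JZL} and \cite{GWX}: the functional equation you derive for $\varphi$ is the right one, $\varphi(z)=(1-rs^{-1})z/(1-z)$ solves it, and $(1-z)\check{R}+z(1-rs^{-1})\,\mathrm{Id}$ does reproduce every coefficient of \eqref{R(z)}. Nothing further is needed.
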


\begin{remark} Clearly $\check{R}(0)=\check{R}$. Moreover, when $r=q$ and $s=q^{-1}$, the R-matrix
$\check{R}(z)$ turns into
\begin{equation*}
\check{R}_q(z)=(1-zq^2)\sum_{i=1}^nE_{ii}\otimes E_{ii}+(1-z)q\sum_{i \neq
j}E_{ij}\otimes E_{ji}+(1-q^2)(\sum_{i>j}+z\sum_{i<j})E_{ii}\otimes
E_{jj},
\end{equation*}
which is exactly the Jimbo R-matrix for the quantum affine algebra
$U_{q}(\mathfrak{sl}_n)$ \cite{Jb1}. In this regard we can view $\check{R}(z)$ as an
$(r,s)$-analogue of the R-matrix $\check{R}_q(z)$ of the quantum affine
algebra $U_{q}(\widehat{\mathfrak{sl}}_n)$ \cite{HRZ}. The one-parameter
$R$-matrix can also be used to treat the quantum algebra $U_q(\mathfrak{gl}(m|n))$ \cite{Da}.
\end{remark}

For convenience, we introduce a normalized R-matrix with two spectral parameters
\begin{align}\nonumber
\check{R}(x,y)&=\frac{sy\check{R}(x/y)}{y-x}\\ 
&=\frac{sy-rx}{y-x}\sum_{i=1}^nE_{ii}\otimes E_{ii}
+(sr\sum_{i>j}+\sum_{i<j})E_{ij}\otimes E_{ji}  \label{e:YBE2}\\
&+\frac{(s-r)x}{y-x}\sum_{i<j}E_{ii}\otimes
E_{jj}+\frac{(s-r)y}{y-x}\sum_{i>j}E_{ii}\otimes E_{jj}. \nonumber
\end{align}
The original YBE immediately implies the following Yang-Baxter equation:
\begin{equation}\label{e:braid2}
\check{R}_i(x,y)\check{R}_{i+1}(x,z)\check{R}_i(y,z)
=\check{R}_{i+1}(y,z)\check{R}_i(x,z)\check{R}_{i+1}(x,y).
\end{equation}

Let $S_{r,s}^2(V)$ be the subspace of $V\otimes V$ spanned by
$\{v_i\otimes v_i|1\leq i\leq n\}\cup \{v_i\otimes v_j+sv_j\otimes v_i|1\leq i<j\leq n\}$
and $\Lambda_{r, s}^2(V)$ be the subspace of $V\otimes V$ spanned by
$\{v_i\otimes v_j-rv_j\otimes v_i|1\leq i<j\leq n\}$.
\begin{proposition}\cite{JZL}\label{prop3.3} The subspace
$S_{r,s}^2(V)$ is equal to the image of $\check{R}(1, r^{-1}s)$ on $V\otimes
V$, and $\Lambda_{r, s}^2(V)$ is equal to the image of $\check{R}(1, rs^{-1})$
on $V\otimes V$.
\end{proposition}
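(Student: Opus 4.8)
The plan is to diagonalize $\check R$ on $V\otimes V$, to observe that every $\check R(x,y)$ is diagonalized by the same eigenvectors, and then to read off the images at the two special points.

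First I would identify $V\otimes V=S_{r,s}^2(V)\oplus\Lambda_{r,s}^2(V)$ with the eigenspace decomposition of $\check R$. By \eqref{minimal polynomial} the minimal polynomial of $\check R$ is $(t-1)(t+rs^{-1})$ (here $s\ne-r$), so $V\otimes V=\ker(\check R-\mathrm{Id})\oplus\ker(\check R+rs^{-1}\mathrm{Id})$. A short computation with \eqref{R-matrix} shows that $\check R$ fixes $v_i\otimes v_i$ and $v_i\otimes v_j+s\,v_j\otimes v_i$ for $i<j$, while $\check R(v_i\otimes v_j-r\,v_j\otimes v_i)=-rs^{-1}(v_i\otimes v_j-r\,v_j\otimes v_i)$ for $i<j$. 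Hence $S_{r,s}^2(V)\subseteq\ker(\check R-\mathrm{Id})$ and $\Lambda_{r,s}^2(V)\subseteq\ker(\check R+rs^{-1}\mathrm{Id})$, and since $\dim S_{r,s}^2(V)+\dim\Lambda_{r,s}^2(V)=\binom{n+1}{2}+\binom{n}{2}=n^2=\dim(V\otimes V)$, both inclusions are equalities. Let $P_S$ and $P_\Lambda$ be the corresponding idempotents, so $\mathrm{Id}=P_S+P_\Lambda$ and $\check R=P_S-rs^{-1}P_\Lambda$.

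Next I would note that $\check R(z)$ is an affine function of $\check R$: a direct comparison of \eqref{R(z)} with \eqref{R-matrix} gives $\check R(z)=(1-z)\check R+z(1-rs^{-1})\mathrm{Id}=(1-zrs^{-1})P_S+(z-rs^{-1})P_\Lambda$. Since $\check R(x,y)$ is a scalar multiple of $\check R(z)$ for the value of $z$ fixed by its definition, it has the form $\sigma(x,y)\,P_S+\lambda(x,y)\,P_\Lambda$ with $\sigma$ and $\lambda$ explicit rational functions of $(x,y)$. It therefore preserves $S_{r,s}^2(V)$ and $\Lambda_{r,s}^2(V)$, and its image is all of $V\otimes V$, or $S_{r,s}^2(V)$, or $\Lambda_{r,s}^2(V)$, according to whether both of $\sigma,\lambda$, only $\lambda$, or only $\sigma$ is nonzero.

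Finally I would substitute $(x,y)=(1,rs^{-1})$ and $(x,y)=(1,r^{-1}s)$ into $\sigma$ and $\lambda$, using $r\ne s$ and $r\ne-s$ to ensure the surviving scalar does not vanish. This last step is where the only care is needed — one must track the normalizing factor and the value of $z$ carefully — and it shows that at $(1,rs^{-1})$ exactly $\lambda$ vanishes, so $\check R(1,rs^{-1})$ is a nonzero multiple of $P_S$ and $\mathrm{Im}\,\check R(1,rs^{-1})=S_{r,s}^2(V)$, and that at $(1,r^{-1}s)$ exactly $\sigma$ vanishes, so $\mathrm{Im}\,\check R(1,r^{-1}s)=\Lambda_{r,s}^2(V)$. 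No genuine obstacle arises; all the substance of the argument is in the diagonalization of $\check R$ performed above.
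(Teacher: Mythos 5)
Your strategy is the right one, and the first two steps check out: $V\otimes V=S^2_{r,s}(V)\oplus\Lambda^2_{r,s}(V)$ is indeed the eigenspace decomposition of $\check R$ for the eigenvalues $1$ and $-rs^{-1}$, and a direct comparison of \eqref{R-matrix} with \eqref{R(z)} confirms $\check R(z)=(1-z)\check R+z(1-rs^{-1})\mathrm{Id}=(1-zrs^{-1})P_S+(z-rs^{-1})P_\Lambda$, so that $\check R(x,y)=\frac{sy}{y-x}\check R(x/y)$ has the form $\sigma(x,y)P_S+\lambda(x,y)P_\Lambda$ and its image is determined by which coefficient vanishes. (The paper itself gives no proof, citing \cite{JZL}, so your argument is the natural one to supply.)

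The difficulty is the last step, which you yourself flag as the only delicate point but then assert rather than carry out --- and the assertion is the reverse of what the substitution actually gives. At $(x,y)=(1,rs^{-1})$ one has $z=x/y=sr^{-1}$, so $\sigma=\frac{sy}{y-x}(1-zrs^{-1})=0$ while $\lambda=\frac{sy}{y-x}(z-rs^{-1})=-(r+s)\neq 0$; hence $\check R(1,rs^{-1})=-(r+s)P_\Lambda$ and its image is $\Lambda^2_{r,s}(V)$, not $S^2_{r,s}(V)$. Symmetrically, at $(1,r^{-1}s)$ one gets $z=rs^{-1}$, the $P_\Lambda$-coefficient vanishes, and $\check R(1,r^{-1}s)=(r+s)P_S$ with image $S^2_{r,s}(V)$. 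So your method, correctly executed, pairs the two evaluation points with the two subspaces in the opposite order from the statement you set out to prove. This is not a defect of your framework: the pairing it produces is the one forced by the rest of the paper, since the Hecke symmetrizer $E^{(2)}_T=\frac{1}{r+s}(sT_1+r)$ arises from $T_1(u_1,u_2)$ evaluated at $u_2/u_1=s/r=r^{-1}s$ and corresponds under $T_1\mapsto\check R_1$ to $\check R(1,r^{-1}s)$, whose image must therefore be the symmetric square. The proposition as printed (and the Remark after it) appears to have the two arguments interchanged relative to the paper's own normalization of $\check R(x,y)$; note also that the displayed coefficients in \eqref{e:YBE2} do not literally agree with $\frac{sy\check R(x/y)}{y-x}$ (the last two should be $\frac{x(s-r)}{y-x}$ and $\frac{y(s-r)}{y-x}$). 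In any case you cannot claim the substitution ``shows'' the stated pairing: you should either perform the evaluation and record the discrepancy, or identify the convention under which the printed statement is the correct one.
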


\begin{remark} The above result is equivalent to
$S_{r,s}^2(V)=Ker\, \check{R}(1, sr^{-1})$ and
$\Lambda_{r, s}^2(V)=Ker\, \check{R}(1, s^{-1}r)$. This suggests that special values
of the Yang-Baxter matrix can lead to irreducible modules, which we will show in
general in Section \ref{fusion}.
\end{remark}

\begin{proposition}\cite{JZL} The $k$th fundamental representation of $U_{r,s}(\mathfrak{sl}_n)$
can be realized as the following quotient of the $k$-fold tensor product
\begin{equation}
V^{\otimes k}/\sum_{i=0}^{k-2}V^{\otimes i}\otimes S^2_{r,s}(V)
\otimes V^{\otimes (k-i-2)}\cong V({\overline{\omega}_k}).
\end{equation}
\end{proposition}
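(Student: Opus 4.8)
The plan is to realize $V(\overline{\omega}_k)$ as the image of a suitable product of normalized $R$-matrices, and to identify its dimension combinatorially. First I would introduce the operator
\[
A_k=\check{R}_{1}(x_1,x_k)\cdots\check{R}_{k-1}(x_{k-1},x_k)\cdots\check{R}_{1}(x_1,x_3)\check{R}_{2}(x_2,x_3)\check{R}_{1}(x_1,x_2)
\]
on $V^{\otimes k}$, specialized to the values $x_i=(rs^{-1})^{-i}$ (or $x_i/x_j=(sr^{-1})^{\,j-i}$), so that each consecutive factor $\check{R}_i(x_i,x_{i+1})$ is, up to scalar, $\check{R}(1,sr^{-1})$ acting in slots $i,i+1$. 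By the previous proposition this factor has kernel exactly $S^2_{r,s}(V)$ in those two slots. Using the Yang-Baxter relation \eqref{e:braid2} one shows that $A_k$ is, up to a nonzero scalar, the $q$-antisymmetrizer: it is an idempotent whose kernel is $\sum_{i=0}^{k-2}V^{\otimes i}\otimes S^2_{r,s}(V)\otimes V^{\otimes(k-i-2)}$. Here the Hecke-algebra picture ($q=\sqrt{s/r}$, minimal polynomial \eqref{minimal polynomial}) is convenient: $\check{R}_i$ satisfies a quadratic relation, the symmetrizer/antisymmetrizer are the usual primitive idempotents, and $S^2_{r,s}(V)=\mathrm{Im}\,\check{R}(1,rs^{-1})$ is precisely the $(+1)$-eigenspace-type summand killed by the antisymmetrizer.

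Next I would show the quotient $W:=V^{\otimes k}/\sum_i V^{\otimes i}\otimes S^2_{r,s}(V)\otimes V^{\otimes(k-i-2)}$ is a highest weight module of highest weight $\overline{\omega}_k$. Concretely, in the quotient the vectors $v_{i_1}\otimes\cdots\otimes v_{i_k}$ with a repeated index vanish (mod the relation $v_i\otimes v_i\in S^2_{r,s}(V)$ and, by straightening with the relation $v_i\otimes v_j\equiv -s\,v_j\otimes v_i$, any ordering can be reduced), so a spanning set is indexed by strictly increasing sequences $1\le i_1<\cdots<i_k\le n$; this gives $\dim W\le\binom{n}{k}$. The highest weight vector is the image of $v_1\otimes v_2\otimes\cdots\otimes v_k$: using $\Delta^{(k-1)}(e_j)$ and the explicit action of $e_j=E_{j,j+1}$, $\om_j$, $\om_j'$ on $V$ one checks it is annihilated by all $e_j$ and has weight $\widehat{\overline{\omega}_k}$, so $W$ surjects onto $V(\overline{\omega}_k)$. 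Since $\dim V(\overline{\omega}_k)=\binom{n}{k}$ (this is the $k$th fundamental representation, whose dimension is the same as in the classical and one-parameter cases, e.g. by the Schur-Weyl duality of \cite{BW3} applied to the one-dimensional sign-type idempotent of $H_q(\mathfrak{S}_k)$), the surjection $W\twoheadrightarrow V(\overline{\omega}_k)$ together with $\dim W\le\binom{n}{k}$ forces $W\cong V(\overline{\omega}_k)$.

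The main obstacle I anticipate is the bookkeeping in the straightening/spanning argument and in verifying $\dim W\le\binom{n}{k}$ cleanly: one must be careful that, after quotienting by the $S^2_{r,s}(V)$ relations in all adjacent positions, every monomial really does reduce to a strictly increasing one with no leftover linear dependencies beyond the obvious $\binom{n}{k}$. Phrasing this via the Hecke-algebra antisymmetrizer $A_k$ sidesteps the problem: one then needs (i) $\mathrm{Im}\,A_k$ has the stated basis — this is standard for the Hecke algebra acting on $V^{\otimes k}$, giving $\dim\mathrm{Im}\,A_k=\binom{n}{k}$ — and (ii) $\ker A_k$ equals the sum of the $S^2_{r,s}(V)$-subspaces, which follows because $A_k$ factors through each adjacent antisymmetrizer and conversely each such subspace lies in $\ker A_k$. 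Everything else is a direct computation with the explicit formulas \eqref{R-matrix}, \eqref{e:YBE2} and the comultiplication $\Delta$.
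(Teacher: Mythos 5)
The paper does not prove this proposition; it is quoted from \cite{JZL} (whose title, ``Wedge modules for two-parameter quantum groups,'' indicates exactly the construction you describe), so there is no in-paper argument to compare against. Your proposal is the standard wedge-module argument and I believe it is essentially correct: the quotient is a $U_{r,s}$-module because $S^2_{r,s}(V)$ is the $1$-eigenspace of $\check R$, hence a submodule by Proposition~\ref{e:SW1}; the straightening relations $v_i\otimes v_i\equiv 0$ and $v_j\otimes v_i\equiv -s^{-1}v_i\otimes v_j$ ($i<j$) give the spanning bound $\binom{n}{k}$; the image of $v_1\otimes\cdots\otimes v_k$ is killed by every $e_j$ (applying $e_j$ creates an adjacent repeat) and has weight $\widehat{\overline\omega}_k$; and the dimension count closes the argument. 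Two points deserve care. First, your Hecke-theoretic shortcut needs the (true, but not free) fact that the kernel of the sign idempotent $e_-$ of $H_k(r,s)$ on any module equals $\sum_i \mathrm{Im}(T_i+\tfrac{r}{s})$, which is what identifies $\ker A_k$ with $\sum_i V^{\otimes i}\otimes S^2_{r,s}(V)\otimes V^{\otimes(k-i-2)}$; this is where semisimplicity of $H_k(r,s)$ (i.e.\ $rs^{-1}$ not a root of unity) enters, and it is cleaner to argue directly with $e_-$ and Corollary~\ref{Cor4.4} than with the fused product $A_k$, whose identification with $e_-$ is itself the $\lambda=(1^k)$ case of the fusion theorem of Section~\ref{fusion}. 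Second, you quietly import $\dim V(\overline\omega_k)=\binom{n}{k}$ from \cite{BW3}; that is a legitimate input but should be stated as such, since without it the spanning argument only gives a surjection onto $V(\overline\omega_k)$ from a module of dimension at most $\binom{n}{k}$, and one needs the lower bound to conclude the isomorphism.
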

In Section \ref{fusion} we will give an alternative way to construct all irreducible modules using
the Yang-Baxter equation.

\section{The Hecke algebra and the Schur-Weyl duality for $U_{r,s}(\mathfrak{gl}_n)$}

For any $r, s\in\mathbb{C}\setminus\{0\}$, we introduce the Hecke algebra $H_{m}(r,s)$ as follows.
\begin{definition} The Hecke algebra
$H_m(r,s)$ is the unital associative algebra over $\mathbb{C}$ with
generators $T_i$, $1\leq i<m$, subject to the following relations:

(H1): $T_iT_{i+1}T_{i}=T_{i+1}T_iT_{i+1},~~~~~~~1\leq i<m-1,$

(H2): $T_iT_j=T_jT_i,~~~~~~~|i-j|>1,$

(H3): $(T_i-1)(T_i+\frac{r}{s})=0. $
\end{definition}

\begin{remark}
When $r\neq 0$, the elements $t_i=\sqrt{\frac{s}{r}}T_i$ satisfy

$$(H3'): (t_i-\sqrt{\frac{s}{r}})(t_i+\sqrt{\frac{r}{s}})=0.$$

If we set $q=\sqrt{\frac{s}{r}}$, the two parameter Hecke algebra
$H_m(r,s)$ is isomorphic to $H_m(q)$, the Hecke algebra associated to the
symmetric group ${\mathfrak S}_m$. By a well-known result of Hecke algebras
$H_m(r,s)$ is semisimple whenever
$\sqrt{\frac{s}{r}}$ is not a root of unity.
\end{remark}

From Section 2, it is easy to verify that the $U_{r,s}(\mathfrak{gl}_n)$-module
$V^{\otimes m}$ affords a representation of Hecke algebra $H_m(r,s)$:
\begin{align}
H_m(r,s)\rightarrow End_{U_{r,s}(\mathfrak{gl}_n)}(V^{\otimes m})
\end{align}
$$T_i\mapsto \check{R}_i~~~~~~~~~~~~~~~~(1\leq i<m).$$

Benkart and Witherspoon \cite{BW3} gave a two parameter analogue of
the Schur-Weyl duality for $U_{r,s}(\mathfrak{sl}_n)$ and
the Hecke algebra $H_{r, s}({\mathfrak S}_k)$ associated with the
symmetric group ${\mathfrak S}_k$, which we recall as follows. 

\begin{proposition} \cite{BW3}
Assume $rs^{-1}$ is not a root of unity. Then:

(i) $H_{m}(r,s)$ maps surjectively onto $End_{U_{r,s}(\mathfrak{gl}_n)}(V^{\otimes m})$.

(ii) if $n\geq m$, $H_m(r,s)$ is isomorphic to $End_{U_{r,s}(\mathfrak{gl}_n)}(V^{\otimes m})$.
\end{proposition}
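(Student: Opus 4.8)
The plan is to run the double centralizer argument. Write $\mathcal{U}$ and $\mathcal{H}$ for the images of $U_{r,s}(\mathfrak{gl}_n)$ and of $H_m(r,s)$ in $\mathrm{End}(V^{\otimes m})$, the latter via $T_i\mapsto\check R_i$; by Proposition~\ref{e:SW1} these two subalgebras centralize one another, so $\mathcal{H}\subseteq\mathrm{End}_{\mathcal{U}}(V^{\otimes m})=\mathrm{End}_{U_{r,s}(\mathfrak{gl}_n)}(V^{\otimes m})=:\mathcal{B}$, and the point is to prove $\mathcal{H}=\mathcal{B}$. Since $rs^{-1}$, equivalently $q=\sqrt{s/r}$, is not a root of unity, $H_m(r,s)\cong H_m(q)$ is split semisimple with simple modules $S^\lambda$ indexed by partitions $\lambda\vdash m$, $\dim S^\lambda=f^\lambda$ the number of standard tableaux of shape $\lambda$, and $\dim H_m(r,s)=\sum_{\lambda\vdash m}(f^\lambda)^2=m!$; in particular $\mathcal H$, a quotient of a semisimple algebra, is semisimple. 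On the other side every finite-dimensional $U_{r,s}(\mathfrak{gl}_n)$-module is completely reducible, so $V^{\otimes m}\cong\bigoplus_\lambda L(\lambda)^{\oplus m_\lambda}$ with the $L(\lambda)$ pairwise non-isomorphic simple; hence the Jacobson radical of $\mathcal U$ annihilates the faithful semisimple module $V^{\otimes m}$ and vanishes, and as $\mathbb C$ is algebraically closed, $\mathcal U=\prod_\lambda\mathrm{End}(L(\lambda))$. The double centralizer theorem then gives $\mathcal B=\prod_\lambda\mathrm{End}(W_\lambda)$ with $W_\lambda=\mathrm{Hom}_{\mathcal U}(L(\lambda),V^{\otimes m})$ and $\dim W_\lambda=m_\lambda$, so $\dim\mathcal B=\sum_\lambda m_\lambda^2$; both algebras being semisimple with $\mathcal H\subseteq\mathcal B$, it is enough to prove $\dim\mathcal H=\dim\mathcal B$.

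I would compute $\dim\mathcal B$ by characters. The module $V$ has weights $\epsilon_1,\dots,\epsilon_n$, each of multiplicity one, so $\mathrm{ch}\,V^{\otimes m}=(x_1+\cdots+x_n)^m$ with $x_i=e^{\epsilon_i}$. Since the highest weight theory of $U_{r,s}(\mathfrak{gl}_n)$ runs parallel to the classical one when $rs^{-1}$ is not a root of unity, the simples occurring in $V^{\otimes m}$ are exactly the $L(\lambda)$ with $\lambda\vdash m$ having at most $n$ parts, with $\mathrm{ch}\,L(\lambda)=s_\lambda(x_1,\dots,x_n)$ the Schur polynomial; the classical identity $(x_1+\cdots+x_n)^m=\sum_{\lambda\vdash m,\ \ell(\lambda)\le n}f^\lambda\,s_\lambda(x_1,\dots,x_n)$ together with the linear independence of these $s_\lambda$ then forces $m_\lambda=f^\lambda$ when $\ell(\lambda)\le n$ and $m_\lambda=0$ otherwise, whence $\dim\mathcal B=\sum_{\lambda\vdash m,\ \ell(\lambda)\le n}(f^\lambda)^2$.

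On the other side, $\dim\mathcal H=\sum_{\mu\in\Gamma}(f^\mu)^2$, where $\Gamma$ is the set of partitions $\mu$ for which $S^\mu$ occurs in $V^{\otimes m}$ regarded as an $H_m(r,s)$-module; as $\mathcal H\subseteq\mathcal B$ forces $\Gamma\subseteq\{\lambda:\ell(\lambda)\le n\}$, the whole statement reduces to showing that $S^\lambda$ \emph{does} occur whenever $\ell(\lambda)\le n$. Granting this, $\Gamma=\{\lambda:\ell(\lambda)\le n\}$, so $\dim\mathcal H=\dim\mathcal B$ and therefore $\mathcal H=\mathcal B=\mathrm{End}_{U_{r,s}(\mathfrak{gl}_n)}(V^{\otimes m})$, which is (i); for (ii), when $n\ge m$ every partition of $m$ has at most $m\le n$ parts, so $\dim\mathcal B=\sum_{\lambda\vdash m}(f^\lambda)^2=m!=\dim H_m(r,s)$, and the surjection of (i) is forced to be an isomorphism.

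The step I expect to be the real obstacle is precisely this last point --- that each simple Hecke module $S^\lambda$ with $\ell(\lambda)\le n$ survives on $V^{\otimes m}$. One route is to borrow the answer from Jimbo's one-parameter quantum Schur--Weyl duality for $U_q(\mathfrak{gl}_n)$, using that $U_{r,s}(\mathfrak{gl}_n)$ is closely tied to $U_q(\mathfrak{gl}_n)$ by a twist so that the two counting problems agree (along the lines of the abstract argument alluded to in the introduction). The alternative, which stays inside the two-parameter world, is to exhibit for each such $\lambda$ a primitive idempotent of $H_m(r,s)$ lying in the $S^\lambda$-block and acting nontrivially on $V^{\otimes m}$ --- an $(r,s)$-analogue of the Young symmetrizer built directly from the $R$-matrix \eqref{R-matrix}, in the spirit of the fusion construction developed below. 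Everything else --- semisimplicity of the two algebras, the character computation of $\dim\mathcal B$, and the double centralizer theorem --- is routine, and the assumption that $rs^{-1}$ is not a root of unity enters exactly to guarantee complete reducibility on the $U_{r,s}(\mathfrak{gl}_n)$ side and semisimplicity of $H_m(r,s)$.
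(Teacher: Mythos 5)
The paper offers no proof of this statement at all---it is quoted verbatim from [BW3]---so the only question is whether your argument stands on its own. The framework you set up (both algebras act, they commute by Proposition \ref{e:SW1}, both images are semisimple because $rs^{-1}$ is not a root of unity, so equality $\mathcal H=\mathcal B$ reduces to a dimension count via the double centralizer theorem) is the standard route and is essentially what Benkart--Witherspoon, following Jimbo, carry out. But as written the argument does not close: you yourself flag that everything hinges on showing that every simple Hecke module $S^\lambda$ with $\ell(\lambda)\le n$ actually occurs in $V^{\otimes m}$, and you only gesture at two possible ways to prove it. That occurrence statement is not a technicality---it is exactly the content of the proposition. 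The double centralizer theorem gives you $\mathcal H\subseteq\mathcal B$ for free and converts the claim $\mathcal H=\mathcal B$ into the claim that the $\mathcal B$-isotypic spaces $W_\lambda$ are simple and pairwise non-isomorphic over $\mathcal H$; knowing only $\dim W_\lambda=f^\lambda$ does not give this, so the reduction has not actually gained anything until the occurrence (equivalently, injectivity/kernel) statement is proved. Note also a circularity risk with your second proposed route: the fusion idempotents $E^\lambda_T$ of Sections 5--6 of this paper are used to build $V(\lambda)=E^\lambda_T(V^{\otimes m})$ \emph{from} Corollary \ref{Cor4.4}, which is itself a consequence of this proposition, so you cannot invoke them here without an independent nonvanishing argument.

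Concretely, what would close the gap is the following (and is what the cited source does): for $n\ge m$ one shows the map $T_\sigma\mapsto\check R_{\sigma}$ is injective by checking that the vectors $\check R_{\sigma}(v_1\otimes v_2\otimes\cdots\otimes v_m)$, $\sigma\in\mathfrak S_m$, are linearly independent---the coefficient of $v_{\sigma^{-1}(1)}\otimes\cdots\otimes v_{\sigma^{-1}(m)}$ in $\check R_\sigma(v_1\otimes\cdots\otimes v_m)$ is a nonzero power of $r$ or $s^{-1}$ and the expansion is triangular with respect to the Bruhat order, using that the $n$ basis vectors are distinct when $n\ge m$. Injectivity plus your dimension count $\dim\mathcal B=\sum_{\lambda\vdash m}(f^\lambda)^2=m!$ gives (ii), and then (i) for $n<m$ follows by identifying the kernel with the two-sided ideal $\bigoplus_{\ell(\lambda)>n}\mathrm{End}(S^\lambda)$ (e.g.\ by restricting from $n'\ge m$ down to $n$ and checking that the $(n+1)$-fold $(r,s)$-antisymmetrizer kills $V^{\otimes(n+1)}$ while the blocks with $\ell(\lambda)\le n$ survive). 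Two further inputs you use silently should at least be acknowledged as imported from [BW3]: complete reducibility of $V^{\otimes m}$ over $U_{r,s}(\mathfrak{gl}_n)$, and the fact that $\mathrm{ch}\,L(\lambda)$ is the Schur polynomial $s_\lambda$; neither is automatic for the two-parameter algebra, though both hold when $rs^{-1}$ is not a root of unity.
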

Let $\lambda=(\la_1, \la_2, \ldots, \la_l)$ be a partition, where $l=l(\lambda)$ is the length.
\begin{corollary}\label{Cor4.4}
When $rs^{-1}$ is not a root of unity,
as an $(U_{r,s}(\mathfrak{gl}_n)\otimes H_m(r,s))$-module, the space $V^{\otimes m}$ has
the following decomposition:
\begin{equation*}
V^{\otimes m}\cong \bigoplus_{\lambda} V_{\lambda}\otimes V^{\lambda},
\end{equation*}
where the partition $\lambda$ of $m$ runs over the set of partition such that
$l(\lambda)\leq n$,
$V_{\lambda}$ is the $U_{r,s}(\mathfrak{gl}_n)$-module associated to $\lambda$,
and $V^{\lambda}$ is the $H_m(r,s)$-module corresponding to $\lambda$.
\end{corollary}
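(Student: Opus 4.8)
The plan is to run a double centralizer argument. Write $W=V^{\otimes m}$ and let $A\subseteq\mathrm{End}_{\mathbb C}(W)$ be the image of $H_m(r,s)$ under $T_i\mapsto\check R_i$. By the Schur--Weyl duality proposition (from \cite{BW3}) we have $A=\mathrm{End}_{U_{r,s}(\mathfrak{gl}_n)}(W)$, and since $H_m(r,s)\cong H_m(q)$ with $q=\sqrt{s/r}$ not a root of unity, $H_m(r,s)$ is semisimple; hence so is its quotient $A$. I would also record here that the category of finite dimensional $U_{r,s}(\mathfrak{gl}_n)$-modules is semisimple when $rs^{-1}$ is not a root of unity (the two-parameter analogue of the one-parameter fact, along the lines of \cite{BW3}), so that $W$ is a completely reducible $U_{r,s}(\mathfrak{gl}_n)$-module and the image $B$ of $U_{r,s}(\mathfrak{gl}_n)$ in $\mathrm{End}_{\mathbb C}(W)$ is a semisimple subalgebra. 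By Proposition~\ref{e:SW1}, $B$ and $A$ centralize each other, and $B\subseteq\mathrm{End}_A(W)$.

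Next I would invoke the double centralizer theorem. Since $B$ is semisimple and $W$ is finite dimensional, $\mathrm{End}_B(W)$ is semisimple, $\mathrm{End}_{\mathrm{End}_B(W)}(W)=B$, and applying this with $\mathrm{End}_B(W)=A$ gives $\mathrm{End}_A(W)=B$ together with a decomposition of $W$ as a $(B\otimes A)$-module of the form $W\cong\bigoplus_i U_i\otimes W_i$, where the $U_i$ are the pairwise non-isomorphic irreducible $B$-modules (equivalently irreducible $U_{r,s}(\mathfrak{gl}_n)$-modules) occurring in $W$ and $W_i=\mathrm{Hom}_B(U_i,W)$ are the corresponding pairwise non-isomorphic irreducible $A$-modules. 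Pulling back along the surjection $H_m(r,s)\twoheadrightarrow A$ makes each $W_i$ an irreducible $H_m(r,s)$-module, so at this stage we already obtain $W\cong\bigoplus_i V_{\lambda_i}\otimes V^{\lambda_i}$ as an $(U_{r,s}(\mathfrak{gl}_n)\otimes H_m(r,s))$-module with all factors irreducible, the index set still to be identified.

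It remains to identify the index set with $\{\lambda\vdash m:\ l(\lambda)\le n\}$. Every weight of $W=V^{\otimes m}$ has the form $\sum_{k=1}^n a_k\epsilon_k$ with $a_k\in\mathbb Z_{\ge 0}$ and $\sum_k a_k=m$, so the highest weight of any irreducible $U_{r,s}(\mathfrak{gl}_n)$-summand is a dominant weight of this shape, i.e. a partition $\lambda$ of $m$ with $l(\lambda)\le n$; this identifies $V_{\lambda_i}=V(\lambda_i)$ and shows the index set is contained in $\{\lambda\vdash m:\ l(\lambda)\le n\}$. For the reverse inclusion I would, for each such $\lambda$, produce a nonzero highest weight vector of weight $\lambda$ in $V^{\otimes m}$ built from $v_1^{\otimes\lambda_1}\otimes v_2^{\otimes\lambda_2}\otimes\cdots$ by antisymmetrizing along the columns of the Young diagram of $\lambda$ (so $V(\lambda)$ genuinely occurs), or alternatively use part (ii) of the Schur--Weyl proposition in the stable range $n\ge m$ together with the fact that the set of occurring partitions depends on $n$ only through the bound $l(\lambda)\le n$; either way all such $\lambda$ occur, and by the classification of irreducible $H_m(q)$-modules by partitions of $m$ the module $V^{\lambda}$ is exactly the one labelled by $\lambda$. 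Combining the two inclusions yields the stated decomposition. The main obstacle I anticipate is this last ``conversely'' step --- verifying that every partition with at most $n$ rows really does contribute --- together with making sure that complete reducibility of $V^{\otimes m}$ is genuinely available in the two-parameter setting and not merely analogous to the one-parameter case.
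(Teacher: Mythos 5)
Your double-centralizer argument is exactly the (unwritten) proof the paper intends: Corollary~\ref{Cor4.4} is stated without proof as an immediate consequence of the preceding Benkart--Witherspoon Schur--Weyl duality proposition together with the semisimplicity of $H_m(r,s)$ when $\sqrt{s/r}$ is not a root of unity. The two inputs you flag as possible gaps --- complete reducibility of $V^{\otimes m}$ over $U_{r,s}(\mathfrak{gl}_n)$ and the occurrence of every partition with at most $n$ rows --- are both genuinely needed and are supplied by \cite{BW3}, so your proof is correct and follows the same route as the paper.
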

\begin{remark}
The above theorem is a two-parameter version of the well-known
result of Jimbo \cite{Jb2}.
\end{remark}
In section \ref{fusion} we will give detailed information on the idempotents, which will
then give a realization of all irreducible modules.

\section{The orthogonal primitive idempotents of $H_{m}(r,s)$}
For any index $i=1,...,m-1$, let $s_i=(i,i+1)$ be the adjacent
transposition in the symmetric group $\mathfrak{S}_m$. Take any
element $\sigma\in \mathfrak{S}_m$ and choose a reduced
decomposition $\sigma=s_{i_1}s_{i_2}...s_{i_k}$. Denote
$T_{\sigma}=T_{i_1}T_{i_2}...T_{i_k}$, this element in $H_m(r,s)$
does not depend on the reduced decompositions of $\sigma$ \cite{IMO}.

The Jucys-Murphy elements of $H_m(r,s)$ are defined inductively by
\begin{equation}
y_1=1,~~~~~~~~~~~~~~~~~~~~~~~~~y_{k+1}=\frac{s}{r}T_ky_kT_k,
\end{equation}
where $k=1,...,m-1$. These elements satisfy
$$y_kT_l=T_ly_k,~~~~~~~~~~~~~~l\neq k,k-1.$$
In particular, $y_1, y_2,...y_m$ generate a commutative subalgebra
of $H_m(r,s)$. For any $k=1,...,m$, we let $w_k$ denote the
unique longest element of symmetric group $\mathfrak{S}_k$, which is
regarded as a natural subgroup of $\mathfrak{S}_m$. The
corresponding elements $T_{w_k}\in H_m(r,s)$ are given by
$T_{\omega_1}=1$ and
\begin{equation}
\begin{aligned}
T_{w_k}&=T_1(T_2T_1)...(T_{k-2}T_{k-3}...T_1)(T_{k-1}T_{k-2}...T_1)\\
&=(T_{1}...T_{k-2}T_{k-1})(T_{1}...T_{k-3}T_{k-2})...(T_1T_2)T_1,~~~~~~~~~~~~~~~k=2,...,m.
\end{aligned}
\end{equation}
It can be verified easily that
\begin{equation}\label{eq5.3}
T_{w_k}T_j=T_{k-j}T_{w_k}~~~~~~~~~~~~~~~1\leq j<k\leq m,
\end{equation}
\begin{equation}
T_{w_k}^2=(\frac{r}{s})^{\frac{k(k-1)}{2}}y_1y_2...y_k~~~~~~~~~~~~~~~~~, k=1,...,m.
\end{equation}

For each $i=1,...m-1$, we define the elements \cite{N}:
\begin{equation}
T_i(x,y)=sT_i+\frac{s-r}{\frac{y}{x}-1},
\end{equation}
where $x,y$ are complex variables. We will regard $T_i(x,y)$ as a
rational functions in $x,y$ with values in $H_m(r,s)$. These
functions satisfy the braid relations:
\begin{equation}
T_i(x,y)T_{i+1}(x,z)T_i(y,z)=T_{i+1}(y,z)T_i(x,z)T_{i+1}(x,y).
\end{equation}


We will identify a partition
$\lambda=(\lambda_1,\lambda_2,...,\lambda_l)$ of $m$ with its Young
diagram. The Young diagram is a left-justified array of rows of cells such that the
first row contains $\lambda_1$ cells, the second row contains
$\lambda_2$ cells, etc. A cell outside $\lambda$ is called addable
to $\lambda$ if the union of $\lambda$ and the cell is a (proper) Young diagram. A
$\lambda-tableau$ is obtained by filling in the cells of the Young diagram
bijectively with the numbers $1,...,m$. A tableau $T$ is called
standard if the entries of the tableau increase along the rows and
down the columns. If a cell occurs in the (i,j)-th position, its
(r,s)-content will be defined as $(\frac{s}{r})^{j-i}$. Let $\sigma_k$
denote the (r,s)-content of the cell occupied by $k$ in $T$.
\begin{example}
For $\lambda=(2,1)$, the corresponding Young diagram is
 $$\yng(2,1).$$
The cells at the positions $(1,3)$, $(2,2)$, $(3,1)$ are the addable cells. For the
$\lambda$-tableau
$T=\scriptsize\young(13,2)$, its $(r,s)$-contents are
$\sigma_1=1$, $\sigma_2=\frac{r}{s}$, $\sigma_3=\frac{s}{r}$.
\end{example}

A set of primitive idempotents of $H_m(r,s)$ parameterized by
partitions $\lambda$ of $m$ and the standard $\lambda$-tableaux $T$
can now be defined inductively by the following rule \cite{DJ}. Set
$E_T^{\lambda}=1$ if $m=1$, whereas for $m\geq 2$, one defines inductively
\begin{equation}\label{Eq5.7}
E_T^{\lambda}=E_U^{\mu}\frac{(y_m-\rho_1)...(y_m-\rho_k)}{(\sigma-\rho_1)...(\sigma-\rho_k)},
\end{equation}
where $U$ is the tableau obtained from $T$ by removing the cell
$\alpha$ occupied by $m$, $\mu$ is the shape of $U$, and
$\rho_1,...,\rho_k$ are the (r,s)-contents of all the addable cells
of $\mu$ except for $\alpha$, while $\sigma$ is the (r,s)-content of
the latter.

These elements satisfy the characteristic property that if $\lambda$ and $\lambda'$ are
partitions of $m$
\begin{equation}
E_T^{\lambda}E_{T'}^{\lambda'}=\delta_{\lambda,\lambda'}\delta_{T,T'}E_T^{\lambda}
\end{equation}
for arbitrary standard tableaux $T$ and $T'$ of shapes $\lambda$ and
$\lambda'$ respectively. Moreover,
\begin{equation}
\sum_{\lambda}\sum_{T}E_{T}^{\lambda}=1,
\end{equation}
summed over all partitions $\lambda$ of $m$ and all the standard
$\lambda$-tableaux $T$.

\section{Fusion formulas for the orthogonal primitive idempotents of $H_m(r,s)$}\label{fusion}
 We now apply the fusion
formulas \cite{IMO} to the situation of the two-parameter quantum algebra to derive a corresponding
formula for the idempotents of $H_m(r,s)$, which
can then be used to construct all the irreducible $U_{r,s}(\mathfrak{sl}_n)$-modules.

Let $\lambda=(\lambda_1,...,\lambda_l)$ be a partition of $m$,
the conjugate partition $\lambda'=(\lambda_1',...,\lambda_{l'}')$ of
$\lambda$ the partition of $m$ whose diagram obtained by reflection in the main diagonal.
Hence $\lambda_i'$ is the number of nodes in the ith column of $\lambda$.
Define
$$b(\lambda)=\sum_{i\geq 1}{\lambda_i'\choose 2}.$$
If $\alpha=(i,j)$ is a cell of $\lambda$, then the corresponding
hook is defined as $h_{\alpha}=\lambda_i+\lambda_j'-i-j+1$.

Now we introduce the rational function in complex variables $u_1, \ldots, u_m$
with values in $H_m(r,s)$,
$$\Psi(u_1,...u_m)=\prod_{k=1,...,m-1}(T_k(u_1,u_{k+1})T_{k-1}(u_2,u_{k+1})...T_1(u_k,u_{k+1}))\cdot
T_{w_m}^{-1},$$
where the product is carried out in the order of $k=1, \cdots, m-1$. The following theorem is obtained
by a similar argument as in the one-parameter case \cite{IMO}.

\begin{theorem}
For the partition $\lambda$ of $m$ and a standard $\lambda$-tableau $T$,
the idempotents $E^{\lambda}_T$ can be obtained by the consecutive evaluations
\begin{equation}
E^{\lambda}_T=f(\lambda)\Psi(u_1,...u_m)|_{u_1=\sigma_1}|_{u_2=\sigma_2}...|_{u_m=\sigma_m},
\end{equation}
where
\begin{equation*}
f(\lambda)=(\frac{s}{r})^{b(\lambda')}s^{-{m\choose 2}
}(1-\frac{s}{r})^m\prod_{\alpha\in
\lambda}(1-(\frac{s}{r})^{h_{\alpha}})^{-1}.
\end{equation*}
\end{theorem}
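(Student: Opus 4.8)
The plan is to adapt the one-parameter fusion argument of Isaev--Molev--Os'kin \cite{IMO} to the two-parameter Hecke algebra $H_m(r,s)$, exploiting the fact that after the substitution $q=\sqrt{s/r}$ the algebra $H_m(r,s)$ is isomorphic to the ordinary Hecke algebra $H_m(q)$, with the Jucys--Murphy elements $y_k$, the elements $T_{w_k}$, the Baxterized generators $T_i(x,y)$, and the $(r,s)$-contents $\sigma_k$ all going over to their one-parameter counterparts under this rescaling. Thus the identity to be proved is, after translation, exactly the known fusion formula; the work is to verify that each ingredient behaves correctly under the two-parameter normalization and that the normalization constant $f(\lambda)$ is the correct image of the one-parameter constant.

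First I would establish the needed structural identities in $H_m(r,s)$ directly: the braid relations for the $T_i(x,y)$ (already stated), the commutation $T_i(x,y)T_{i+1}(x,z)T_i(y,z)=T_{i+1}(y,z)T_i(x,z)T_{i+1}(x,y)$, the intertwining relation $T_{w_k}T_j=T_{k-j}T_{w_k}$, and the square formula $T_{w_k}^2=(r/s)^{k-1}y_1\cdots y_k$. These let one rewrite $\Psi(u_1,\dots,u_m)$ recursively: one shows that $\Psi(u_1,\dots,u_m)$ factors as a product of a block involving only $u_1,\dots,u_{m-1}$ (essentially $\Psi(u_1,\dots,u_{m-1})$ up to a power of $T_{w}$) times a single ``column'' $T_{m-1}(u_1,u_m)\cdots T_1(u_{m-1},u_m)$ dressed by $T_{w_m}^{-1}$. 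This is the engine of the induction on $m$.

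Next I would run the induction. Assume the formula holds for $H_{m-1}(r,s)$ with the tableau $U$ obtained from $T$ by deleting the cell $\alpha$ of $m$. Substituting $u_1=\sigma_1,\dots,u_{m-1}=\sigma_{m-1}$ into the first block gives $f(\mu)^{-1}E_U^{\mu}$ times the auxiliary $T_{w}$-factors, and one must show that the remaining column, evaluated at $u_m=\sigma_m$ with the $T_{w_m}^{-1}$, reduces to the rank-one step $\frac{(y_m-\rho_1)\cdots(y_m-\rho_k)}{(\sigma-\rho_1)\cdots(\sigma-\rho_k)}$ of \eqref{Eq5.7}, after absorbing the ratio $f(\lambda)/f(\mu)$. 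Here the key computations are: (a) the column operator acts on $E_U^{\mu}V^{\otimes(m-1)}\otimes V$ and its poles/zeros in $u_m$ are governed by the $(r,s)$-contents of the cells of $\mu$, so that all the genuinely singular factors cancel against $f(\lambda)$ leaving exactly the addable-cell contents $\rho_i$ and $\sigma$; (b) the power-of-$q$ bookkeeping: one checks that $b(\lambda')-b(\mu')$, the change in $\binom{m}{2}$, the extra factor $(1-s/r)$, and the new hook factors $\prod_{\alpha\in\lambda}(1-(s/r)^{h_\alpha})^{-1}/\prod_{\alpha\in\mu}(1-(s/r)^{h_\alpha})^{-1}$ combine to exactly the scalar needed to convert the column's evaluated value into the clean ratio of linear factors in $y_m$. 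The idempotency and orthogonality then follow from the corresponding properties of the $E_T^\lambda$ already recorded.

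The main obstacle is the regularity/cancellation step (a): a priori $\Psi(u_1,\dots,u_m)$ is only a rational function with potentially high-order poles along the diagonals $u_i=u_j$, and one must show that upon the \emph{consecutive} (not simultaneous) evaluations $u_1=\sigma_1$, then $u_2=\sigma_2$, and so on, all spurious singularities disappear and one genuinely lands on $f(\lambda)^{-1}E_T^\lambda$. The one-parameter proof handles this by an inductive analysis of the pole structure of the Baxterized column combined with the relation $T_i(x,y)T_i(y,x)=$ scalar and with the action of $y_m$ on the weight spaces of $E_U^\mu V^{\otimes(m-1)}$; I would transcribe that analysis, checking at each use that the scalars that appear are the $(r,s)$-rescaled versions (so $q^2\mapsto s/r$ uniformly) and that no factor of $r$ or $s$ separately — as opposed to the ratio $s/r$ — sneaks in, since the minimal polynomial \eqref{minimal polynomial} and relation (H3) only involve $r/s$. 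Once that is in place, the rest is the routine power-counting of part (b), which I would carry out by a short lemma on how $b(\lambda')$ and the hook lengths change when a single cell is added.
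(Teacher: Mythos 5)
Your proposal is correct and follows essentially the same route as the paper: the paper gives no written proof of this theorem, stating only that it ``is obtained by a similar argument as in the one-parameter case'' of Isaev--Molev--Os'kin, which is exactly the adaptation you outline. Your extra detail --- the rescaling $t_i=\sqrt{s/r}\,T_i$ carrying the Jucys--Murphy elements, Baxterized generators and contents to their one-parameter counterparts, the inductive factorization of $\Psi$, the regularity of the consecutive evaluations, and the power-counting that produces $f(\lambda)$ --- is a faithful expansion of that one-line argument rather than a different approach.
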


\begin{example}\label{ex6.2}
For $m=2$ and $\lambda=(2)$ we get
$$E^{\lambda}_T=f(\lambda)(sT_1+r),$$
where $f(\lambda)=\frac{1}{r+s}$, for
the standard tableau
$T={\scriptsize\young(12)}$,
In particular, $\sigma_1=1$, $\sigma_2=\frac{s}{r}$.
\end{example}
\begin{example}\label{ex6.3}
For $m=2$ and $\lambda=(1,1)$, we get
$$E^{\lambda}_T=f(\lambda)\frac{s^2}{r}(1-T_1),$$
where $f(\lambda)=\frac{r}{s(r+s)}$, for
the standard tableau
$T={\scriptsize\young(1,2)}$,
In particular, $\sigma_1=1$, $\sigma_2=\frac{r}{s}$.
\end{example}

\begin{example}
For $m=3$, $\lambda=(1,1, 1)$, $T={\scriptsize\young(1,2,3)}$,
we have
\begin{equation*}
\Psi(u_1,u_2,u_3)|_{u_1=\sigma_1}|_{u_2=\sigma_2}|_{u_3=\sigma_3}=\frac{s^6}{r^3}(1-T_1-T_2+T_1T_2+T_2T_1-T_1T_2T_1),
\end{equation*}
and $f(\lambda)=\frac{r^3}{(s+r)(s^2+rs+r^2)s^3}$. Thus we get
\begin{equation*}
E^{\lambda}_T=\frac{s^3}{(s+r)(s^2+rs+r^2)}(1-T_1-T_2+T_1T_2+T_2T_1-T_1T_2T_1),
\end{equation*}
which is the same as that in formula (\ref{Eq5.7}).
\end{example}
From Corollary \ref{Cor4.4}, we have
\begin{equation*}
E^{\lambda}_T(V^{\otimes m})\cong E^{\lambda}_T(\bigoplus_{\mu} V_{\mu}\otimes V^{\mu}).
\end{equation*}
Since
$E_T^{\lambda}$ annihilates the irreducible
$H_m(r,s)$-module $V^{\lambda'}$, we have
\begin{equation*}
E^{\lambda}_T(V^{\otimes m})\cong E^{\lambda}_T(\bigoplus_{\mu} V_{\mu}\otimes V^{\mu})\cong V_{\lambda}\otimes E_{T}^{\lambda}V^{\lambda}.
\end{equation*}
Furthermore, since $E_T^{\lambda}$ acts on the irreducible module $V^{\lambda}$ of
$H_m(r,s)$ as a projector on a 1-dimensional subspace,
we can get the following explicit description of the irreducible
modules of $U_{r, s}(\mathfrak{sl}_n)$.
\begin{theorem}\label{thm6.5}
For a partition $\lambda=(\lambda_1,...,\lambda_l)$ of $m$ with length $l\leq n$ and $T$ a standard $\lambda$-tableau,
$$V(\lambda)=E^{\lambda}_T(V^{\otimes m})$$
is the finite dimensional irreducible representation of $U_{r,s}(\mathfrak{sl}_n)$ with the highest weight
$\sum_{i=1}^{n-1}(\lambda_{i}-\lambda_{i+1})\overline{\omega}_i$.
\end{theorem}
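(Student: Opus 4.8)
The plan is to combine the Schur--Weyl decomposition of Corollary~\ref{Cor4.4} with the idempotent properties of $E^\lambda_T$ established in the preceding sections. First I would recall that, as a $(U_{r,s}(\mathfrak{gl}_n)\otimes H_m(r,s))$-bimodule, $V^{\otimes m}\cong\bigoplus_\mu V_\mu\otimes V^\mu$ where $\mu$ runs over partitions of $m$ with $l(\mu)\le n$. Since $E^\lambda_T$ lies in $H_m(r,s)$, it acts only on the right tensor factors, so $E^\lambda_T(V^{\otimes m})\cong\bigoplus_\mu V_\mu\otimes E^\lambda_T(V^\mu)$. By the characteristic property $E^\lambda_T E^{\mu}_{T'}=\delta_{\lambda\mu}\delta_{TT'}E^\lambda_T$ together with the fact that $E^\lambda_T$ is a primitive idempotent, $E^\lambda_T$ annihilates $V^\mu$ for $\mu\ne\lambda$ and acts as a rank-one projector on $V^\lambda$. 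Hence $E^\lambda_T(V^{\otimes m})\cong V_\lambda\otimes\mathbb{C}$, which is isomorphic to $V_\lambda$ as a $U_{r,s}(\mathfrak{gl}_n)$-module, and a fortiori as a $U_{r,s}(\mathfrak{sl}_n)$-module. The constraint $l(\lambda)\le n$ is exactly what guarantees $V_\lambda$ appears in the decomposition and is nonzero.

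Next I would pin down the highest weight. The module $V_\lambda$ is the irreducible $U_{r,s}(\mathfrak{gl}_n)$-module associated to the partition $\lambda=(\lambda_1,\dots,\lambda_l)$, whose highest weight as a $\mathfrak{gl}_n$-weight is $\sum_{i=1}^n\lambda_i\epsilon_i$ (with $\lambda_i=0$ for $i>l$). Restricting to $U_{r,s}(\mathfrak{sl}_n)$ amounts to projecting this weight onto $\Lambda_{\mathfrak{sl}_n}=\bigoplus\mathbb{Z}\overline{\omega}_i$; using $\overline{\omega}_i=\epsilon_1+\cdots+\epsilon_i$ one computes $\sum_i\lambda_i\epsilon_i\equiv\sum_{i=1}^{n-1}(\lambda_i-\lambda_{i+1})\overline{\omega}_i$ modulo the one-dimensional center, because $\overline{\omega}_i-\overline{\omega}_{i-1}=\epsilon_i$ gives the telescoping identity $\sum_i\lambda_i\epsilon_i=\sum_{i=1}^{n-1}(\lambda_i-\lambda_{i+1})\overline{\omega}_i+\lambda_n\overline{\omega}_n$. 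So the $U_{r,s}(\mathfrak{sl}_n)$-highest weight of $V(\lambda)$ is $\sum_{i=1}^{n-1}(\lambda_i-\lambda_{i+1})\overline{\omega}_i$, exactly as claimed, and by the classification recalled earlier (finite-dimensional irreducibles are simple quotients $V(\lambda)$ of Verma modules, with all generalized weight spaces being genuine weight spaces) this identifies the module uniquely.

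I would present the argument in this order: (1) reduce via Corollary~\ref{Cor4.4} to computing $E^\lambda_T$ acting on the Hecke side; (2) invoke primitivity and the orthogonality relation to conclude $E^\lambda_T(V^\lambda)$ is one-dimensional and $E^\lambda_T(V^\mu)=0$ for $\mu\ne\lambda$; (3) conclude $E^\lambda_T(V^{\otimes m})\cong V_\lambda$; (4) translate the $\mathfrak{gl}_n$-partition into the $\mathfrak{sl}_n$-highest weight. The main obstacle — though it is more bookkeeping than genuine difficulty — is step (4): one must be careful that the $U_{r,s}(\mathfrak{gl}_n)$-module $V_\lambda$ built via Schur--Weyl duality on $V^{\otimes m}$ really does restrict to the $U_{r,s}(\mathfrak{sl}_n)$-module $V(\sum(\lambda_i-\lambda_{i+1})\overline{\omega}_i)$, i.e. that the normalization conventions for $\omega_i,\omega_i'$ in the definition of $U_{r,s}(\mathfrak{sl}_n)$ match the action inherited from $V^{\otimes m}$. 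This is checked directly on the highest weight vector using the explicit action of $\omega_j,\omega_j'$ on $V$ given in Section~2 together with the formula for $\widehat{\lambda}$; one verifies $\omega_j$ acts on the highest weight vector $v_1^{\otimes\lambda_1}\otimes v_2^{\otimes\lambda_2}\otimes\cdots$ by the scalar $r^{\lambda_j}s^{\lambda_{j+1}}=\widehat{\lambda}(\omega_j)$, and similarly for $\omega_j'$, which confirms the weight. The remaining claims (irreducibility, finite-dimensionality) are immediate from $V_\lambda$ being an irreducible summand of the finite-dimensional module $V^{\otimes m}$.
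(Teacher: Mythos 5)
Your proposal is correct and follows essentially the same route as the paper, which sketches exactly this argument in the paragraph preceding the theorem: $E^{\lambda}_T$ acts as a rank-one projector on $V^{\lambda}$ and annihilates the other irreducible $H_m(r,s)$-modules, so by the bimodule decomposition of Corollary~\ref{Cor4.4} one gets $E^{\lambda}_T(V^{\otimes m})\cong V_{\lambda}$. Your additional verification of the highest weight via the telescoping identity and the explicit action of $\omega_j$, $\omega_j'$ on the highest weight vector is a useful elaboration of a step the paper leaves implicit, but it is not a different approach.
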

\begin{remark}
We can see that Proposition \ref{prop3.3} is a special case of Theorem \ref{thm6.5}.
Actually, it is easy to check that $\check{R}(1,r^{-1}s)=s(s\check{R}+r)$. While from
Example \ref{ex6.2} for $\lambda=(2)$, we have $$E^{\lambda}_T=\frac{1}{r-s}(sT_1+r).$$
Thus we have $S_{r,s}^2(V)=\check{R}(1,r^{-1}s)V^{\otimes 2}=E^{(2)}_T(V^{\otimes 2})$.
Similarly, using Example \ref{ex6.3} we have $\Lambda_{r, s}^2(V)=\check{R}(1,rs^{-1})V^{\otimes 2}=E^{(1,1)}_T(V^{\otimes 2})$.

\end{remark}
\medskip

\centerline{\bf Acknowledgments} We thank Alex Molev for discussions on fusion procedures.
NJ gratefully acknowledges the
partial support of Simons Foundation grant no. 523868, Humbolt Foundation,
NSFC grant no. 11531004,
``Mathematical Methods from Physics'' in Summer of 2013 at KITPC,
and MPI for Mathematics in the Sciences in Leipzig during this work.
Liu thanks the support of NSFC grant no. 11701182.

\bibliographystyle{amsalpha}

\end{document}